\documentclass[authoryear]{article}

\usepackage{amsmath, amssymb, amsthm}

\theoremstyle{plain}
\newtheorem{thm}{Theorem}[section]
\newtheorem{lem}[thm]{Lemma}
\newtheorem{prop}[thm]{Proposition}

\theoremstyle{definition}
\newtheorem{rem}{Remark}[section]
\newtheorem{exam}{Example}[section]

\usepackage{natbib}

\usepackage{type1cm}        
\usepackage{makeidx}         
\usepackage{graphicx}        
\usepackage{multicol}        
\usepackage[bottom]{footmisc}

\usepackage{newtxtext}       %
\usepackage{newtxmath} 

\makeindex             

\numberwithin{equation}{section}

\begin{document}
\title{Brown's loss function and estimation of the scale parameter and the entropy in  scale mixture models with unknown location}
%
\author{Stavros Kourouklis\\[1ex]
	\small{Department of Mathematics, University of Patras, 26504 Patras, Greece}\\ 
		\small{e-mail: stavros@math.upatras.gr}
}

\date{}

\maketitle

\begin{abstract}
	This paper deals with the problems of decision theoretic estimation of the scale parameter, $\sigma$, and the entropy (Shannon or Renyi) in general scale mixture models with unknown location. Employing Kubokawa's approach, under convexity of  the loss $L(t)$ either in $t$ or in $lnt$, improved  estimators of $\sigma$ are derived. If $L(t)$ satisfies an additional condition, the problem  reduces to estimating  $\sigma$ in the degenerate (scale mixture) model. As the entropy  is a linear translation of $ln\sigma$, its estimation under a  loss $L_0(u)$ amounts to estimating $\sigma$ under $L(t)=L_0(lnt)$, with the quadratic loss $L_0(u)=u^2$ and Brown's loss $L(t)=(lnt)^2$ being a typical case. Thus, in the case that $L_0(u)$ is convex improved estimators of the entropy are directly obtained from respective estimators of $\sigma$. The results are applied to scale mixtures of normal and exponential distributions, including the multivariate $t$ and multivariate Lomax distributions.
\end{abstract}
{\footnotesize\noindent{\bf Keywords:} decision theory, improved estimation of a scale parameter, Stein-type estimator, Brewster and Zidek-type estimator, Strawderman-type estimator, Kubokawa's approach, scale mixture model}


%
\numberwithin{equation}{section}
\section{Introduction}\label{sect introduction}

The problem of decision theoretic point estimation of a scale parameter in the presence of a nuisance parameter has  a long history tracing back to the seminal work of \cite{Stein1964}. In estimating the variance of a normal distribution with unknown mean, \cite{Stein1964} proved the inadmissibility of the usual (best location scale equivariant) estimator by constructing a (scale equivariant) estimator with smaller mean squared error.

\cite{Brown1968} extended \cite{Stein1964}'s result to general location-scale families of distributions and general (bowled-shaped) loss functions. Furthermore, \cite{Brown1968}  discussed losses which are functions of $|lnt|$ defined  on $[0, +\infty)$,  "as possibly reasonable choices since a scale parameter is being estimated" (the complete reasoning is given on page 37 of his paper). \cite{Brown1968} pointed out that "A mathematically convenient loss function of this type is given by" $(lnt)^2$, and studied properties of this loss in relation to the quadratic loss and the entropy loss. In the literature, $(lnt)^2$ is known as  Brown's loss.

\cite{BrewsterZidek1974}, \cite{Strawderman1974}, and \cite{Kubokawa1994} are further landmark works and classical tools for deriving improved estimators of a scale parameter.

 In this paper we consider the following mixture model. Given $\tau >0 $,  $S$ and  $X$ are independent statistics such that $ S/\big(\rho(\tau)\sigma\big)$ and $ X/\big(\rho(\tau)\sigma\big)$ have densities
\begin{equation}\label{1.1}
g(v)I(v>0) \quad\text{and}\quad h(x;\lambda)I(x>\kappa(\lambda))
\end{equation}
respectively, where $\rho(\tau)>0$ is a known function,  $\lambda=\lambda(\mu,\sigma, \tau)$ is a function of
$\mu$, $\sigma$, $\tau$, 
$\kappa(\lambda)$ is a real function of $\lambda$, and $\mu$, $\sigma$ unknown parameters with $\sigma>0$. The mixing  parameter $\tau$ is assumed to have a  distribution function $G(\cdot)$. In the sequel, for the sake of notational simplicity, $\tau$ will denote either a random variable with distribution function $G(\cdot)$ or a fixed value of this random variable.  Also,  without loss of generality we may and will take $\rho(1)=1$. When $\rho(\tau)=1/\tau$, \eqref{1.1} is the mixture model considered in \cite{PetropoulosKourouklis2005}.
 
 In the degenerate  case, i.e.,   $\tau=1$ wp1 with respect to $G(\cdot)$, \eqref{1.1} becomes \cite{Kubokawa1994}'s framework of independent data $(S,X)$
 that  encompasses after reduction by sufficiency, as typical cases, independent data from a normal distribution (i.i.d. or regression schemes) or an  exponential distribution with various sampling schemes, such as i.i.d., Type-II censoring, progressive Type-II censoring, adaptive progressive Type-II censoring, and record values.
 
 In the general non-degenerate case, \eqref{1.1} includes scale mixtures of normal (SMN) distributions and scale mixtures of exponential (SME) distributions. SMN distributions form an attractive subclass of the class of spherically symmetric distributions and include several interesting members with heavier tails than the normal distribution. In particular, members of SMN distributions are the multivariate $t$, the multivariate double exponential, the slash, and the multivariate Pearson type VII distributions (with $\rho(\tau)=1/\tau$, except for the slash $\rho(\tau)=\tau^{1/q}$). Recent developments of SMN distributions are reviewed in \cite{SMD}. The model of SME distributions was introduced by \cite{LandS} in the context of reliability studies of  a system composed of several components which  share a common operating environment. It  includes as  members the multivariate Lomax and the multivariate modified Lomax distributions ($\rho(\tau)=1/\tau$),   which have heavier tails than the exponential distribution.

 The aim of this work is to contribute to the decision theoretic estimation of the scale parameter $\sigma$ and the entropy, Shannon or Renyi, of the model \eqref{1.1}.

 We now describe our results and position them within the relevant research. In Theorem \ref{thm2.1}, we give sufficient \cite{Kubokawa1994}-type  conditions for an estimator $\delta(S,X)$ to improve on the usual estimator $\delta_0$ of $\sigma$, which is a function, only, of $S$. The loss $L(t)$ is assumed to be either strictly convex  in $t \in (0,\infty)$ or strictly convex in $lnt$, hereafter (strictly) convex in log. In the sequel, the term convex will mean strictly convex.  Theorem \ref{thm2.1} may be viewed as an extension of \cite{Kubokawa1994}'s unified approach on point estimation of a scale parameter (specifically, his Theorem 2.2), from independent data  to the dependent scale mixture data in 
 \eqref{1.1}.  It is stated in \cite{Kubokawa1994} that "... the results can be extended to the case that ... are not independent by replacing ... with suitable assumptions", so Theorem  \ref{thm2.1} presents such a (general and useful) case. Whether the convexity  may be relaxed to (strictly) bowl-shaped as in  \cite{Kubokawa1994} is an issue of interest, see Remark \ref{Remark2.1}. Note that $L(t)$  convex in log is equivalent to $L(t)$ having the form $L(t)=L_0(lnt)$, where $L_0(u)$ is convex in $u \in (-\infty, +\infty)$, for instance, $L(t)=(lnt)^2$ with $L_0(u)=u^2$.

Under an additional condition on the loss due to \cite{Iliop}, namely,
\begin{equation}\label{Ilio}
L'(xy)=\lambda_1(x)L'(y)+\lambda_2(y)L'(x), \, \, \,\, \, x>0,\, y>0
\end{equation}
for some functions $\lambda_1$ and  $\lambda_2$, in Theorem \ref{reddegen} it is shown  that any improved estimator of $\sigma$ obeying the conditions of Theorem \ref{thm2.1} is just a  constant multiple of a corresponding improved estimator of $\sigma$ in the degenerate model ($\tau=1$ wp1)  obeying  \cite{Kubokawa1994}'s conditions (i.e. his Theorem 2.2 or Theorem \ref{degenerate} in Section 2). In this way, we immediately obtain \cite{Stein1964}-type (S-type) and  \cite{BrewsterZidek1974} -type (BZ-type) estimators of $\sigma$ from their degenerate case counterparts which are already available in the literature. Convexity in log and condition \eqref{Ilio} are satisfied by several typical loss functions, including the entropy loss and \cite{Brown1968}'s loss $(lnt)^2$, see Table 1 in  \cite{Bobotas2012}.

We mention  that the results of  Section \ref{sect scale mixtures} unify and generalize main findings in  \cite{PetropoulosKourouklis2005}, see Remark \ref{Remark2.3}. Section \ref{sect scale mixtures} is of independent interest and  can stand by itself, yet it plays a crucial supportive role for Section \ref{sect entropy}.

   Section \ref{sect entropy} deals with the problem of estimation of \cite{Shannon1948} entropy for the mixture model in \eqref{1.1}. For a probability measure (probability distribution) with absolutely continuous density $p$ with respect to a $\sigma$-finite measure $\nu$,  Shannon entropy of $p$  is defined as $ e(p)=- \int (lnp)p d\nu$.
   Originally introduced  by \cite{Shannon1948} as a measure of uncertainty or information in the context of a communication system, it has long been recognized as relevant, potentially, to any system operating in a random environment. In this way, Shannon entropy (hereafter entropy) has opened new paths of research and applications in many and diverse disciplines.
   
   By invariance under location transformations, the entropy of the model in \eqref{1.1} is represented by the entropy of the probability measure (the distribution ) of $S$, which is  $e(\sigma)= ln\sigma$ +e(1). Here,  as the notation suggests, $e(1)$ is the entropy when $\sigma=1$, a known constant for given $\rho(\tau)$ and $G(\cdot)$. In Proposition \ref{prop 3.1}, we show in an elementary way that, given a loss $L_0(u)$ for $u \in (-\infty, +\infty)$, if $\delta$ is an improved estimator of $\sigma$ under $L(t)=L_0(lnt)$, so is $\zeta=ln\delta$ for $ln\sigma$ and $\zeta +e(1)$ for the entropy $e(\sigma)$ under $L_0(u)$. Besides, the method of construction of $\delta$ is immaterial.
     As a consequence, in the case that $L_0(u)$ is convex the results of Section \ref{sect scale mixtures} for estimating $\sigma$  lend themselves to directly estimating $ln\sigma$ and $e(\sigma)$, see Theorem \ref{thm4.2}. For instance, S-type and BZ-type estimators of $\sigma$ in SMN distributions or SME distributions under \cite{Brown1968}'s loss $L(t)=(lnt)^2$ derived in  Section \ref{sect scale mixtures} directly yield S-type and BZ-type estimators of $ln\sigma$ and  $e(\sigma)$ under the quadratic loss $L_0(u)=u^2$. In the special case of estimating the entropy of the degenerate model, convexity is not needed and  it is sufficient that  $L_0(u)$ be strictly bowl-shaped.
      
       Besides, using \cite{PalLing1996}'s \cite{Strawderman1974}-type (ST-type) estimator for the normal variance $\sigma$ under $L(t)=(lnt)^2$, $\delta_{ST}$ say, then Proposition \ref{prop 3.1} produces the ST-type estimator $ln\delta_{ST}$ of $ln\sigma$ under $L_0(u)=u^2$ and a corresponding one for the entropy of the normal distribution. Similarly, ST-type estimators of $\sigma$  under the entropy loss $L(t)=t-lnt-1$ given by \cite{PalLing1995} and \cite{MaruyamaStrawderman2006} yield ST-type estimators of $ln\sigma$ and the entropy under the Linex loss $L_0(u)=e^u-u-1$. We mention that we have constructed ST-type estimators of the scale parameter and the entropy of an exponential distribution  under Brown's loss and quadratic loss, respectively, but we   have not included them here. Also, ST-type estimators of the scale parameter of an exponential distribution  under the entropy loss have been obtained by \cite{Bobotas2011}, thus yielding ST-type estimators of the entropy under the Linex loss. 
   
   In  the literature there are available S-type and BZ-type estimators of $ln\sigma$ and the entropy for the exponential distribution and for  scale mixtures of exponential distributions, see \cite{KK}, \cite{Patra19}, \cite{Patra20}, \cite{Petroetal}. In those models (which are covered by \eqref{1.1}), estimation of $ln\sigma$ is treated as a problem unlinked  to that of estimating $\sigma$, thus requiring extra (unnecessary) arguments  in order to obtain the improved estimators.  In this work, estimation of $ln\sigma$ results as  a by-product of estimating $\sigma$. The advantage of this approach is evident, for instance, in the degenerate case ($\tau=1 wp1)$ improved estimators of $\sigma$ already available in the literature directly yield respective improved estimators for $ln\sigma$ and the entropy.

     We note that all the results of Section \ref{sect entropy} apply as well  to the estimation of Renyi entropy $\varepsilon_r(p)=(1/(1-\alpha)) ln \int p^{\alpha} d\nu$, which in the case of (the distribution of) $S$  in \eqref{1.1}  is expressed as  $\varepsilon_r(\sigma) = ln\sigma$	 + $\varepsilon_r (1)$,
      where  the constant $\varepsilon_r (1)$ is Renyi entropy when $\sigma=1$.

     In summary, this paper sheds new light on  the improved estimation of the scale parameter in scale mixture models and presents a systematic treatment of  the decision theoretic estimation of the entropy of those models.  Brown's loss $(lnt)^2$ plays an instrumental role in linking the two problems.

\section{Estimation of the scale parameter in scale mixture models}\label{sect scale mixtures}

In this section, we  consider the mixture model in \eqref{1.1} with fixed and known (completely specified) $G(\cdot)$. Based on $(S,X)$, we want to estimate $\sigma$ by an estimator $\delta$ incurring loss $L(\delta/\sigma)$. We assume that $L(t)$ is  convex on $(0,\infty)$ or  convex in log on $(0,\infty)$,with minimum $0$ at $t=1$. That is, $L'(t)$ or $tL'(t)$ is strictly increasing (referred to as increasing from here on), with $L(1)=0$ and $L'(1)=0$. Recall that $L(t)$  convex in log is equivalent to   $L(t)=L_0(lnt)$ where $L_0(u)$ is convex on $(-\infty, +\infty )$.  We also assume that integrals involving $L(t)$ or its derivative are finite, and whenever necessary,	interchange of integral and derivative or limit  is permissible. 

For some $\lambda=\lambda_0$, we assume $\kappa(\lambda_0)=0$, and set $H(x;\lambda)=\int_{0}^{x}h(u;\lambda)I(u>\kappa(\lambda)) du$, $h(x)=h(x;\lambda_0)$, $H(x)=\int_{0}^{x}h(u) du$. When needed to stress the dependency of $\lambda=\lambda(\mu,\sigma,\tau)$ on $\tau$, we write $\lambda = \lambda(\tau)$ suppressing the other two parameters $\mu$ and $\sigma$. 

The function $\rho(\tau)$ is assumed to be monotone and without loss of generality we take it to be decreasing.  (In case it is increasing, we can take $\rho(1/\tau)$ as scaling factor in \eqref{1.1} replacing $G(\cdot)$ by the distribution function of $1/\tau$.)

Since  the (unconditional) density of $S/\sigma$,
\begin{equation}\label{pdfs1}
g_0(v)=\int_{0}^{\infty}(1/\rho(\tau)) g(v/\rho(\tau)) dG(\tau), 
\end{equation}
 is free of unknown parameters,  we may consider estimators of the form $\delta=cS$, $c>0$. The best choice $c_0$ that minimizes the risk with respect to $L(t)$, $EL(cS/\sigma)$,
 is the solution to the equation
\begin{equation}\label{bestconstant}
E\{L'(c_0S)S\}=\int_{0}^{\infty}L'(c_0v)vg_0(v) dv= 0,
\end{equation}
where the expectation is evaluated at $\sigma=1$ and the corresponding best constant multiple of $S$ is the estimator
\begin{equation}\label{bestestimator}
\delta_0=c_0S.
\end{equation}
Note that existence and uniqueness of $c_0$ is ensured by convexity.  For improving on the estimator $\delta_0$ 
we consider estimators of the form
\begin{equation}\label{phi}
\delta_{\varphi}= \begin{cases}
\varphi(W) S & \text{if}\ W>0,\\
\delta_0 & \text{otherwise},
\end{cases}
\end{equation}
where $W=X/S$ and $\varphi$  is an absolutely continuous and  positive function on $(0,\infty)$.  The following theorem gives sufficient conditions for the dominance of $\delta_{\varphi}$  over $\delta_0$. First, we need the following conditions.
\begin{description}
	\item (B1) $H(x;\lambda)/H(x)$ is nondecreasing in $x>0$ .
	\item (B2) $ H(x;\lambda(\tau))$ is nonincreasing in $\tau$ in the support of $G(\cdot)$.
	\end{description}
For later use we also state the following monotone likelihood ratio (mlr) conditions.

\begin{description}
	
	\item (A1) $g(c_1 x)/g(c_2 x)$ is increasing in $x>0$, for $0<c_1<c_2$.
	\item (A2) $h(c_1x)/h(c_2x)$ is  increasing in $x>0$, for $0<c_1<c_2$.
\end{description}

\begin{thm}\label{thm2.1}\textit{(the non-degenerate model)}
	Let $L'(t)$ or  $tL'(t)$ be increasing. Assume $(B1)$, $B(2)$ and the following conditions for $\varphi$:
	
	(a) $\varphi(w)$ is nondecreasing and $ \lim_{w\to\infty}\varphi(w)=c_0$.
	
	(b)$\int_{0}^{\infty}\rho(\tau)\int_{0}^{\infty}L'\big(\varphi(w)\rho(\tau)v\big)vg(v)H(wv) dv dG(\tau)\geq0$.
	
	Then $\delta_{\varphi}$ improves on $\delta_0$.
	\end{thm}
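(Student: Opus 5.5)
We follow Kubokawa's integral-expression-of-risk-difference (IERD) method, applied conditionally on $\tau$ and then averaged over $G(\cdot)$. Fix $\mu,\sigma$ and take $\sigma=1$ by scale invariance of the loss; the general case follows by rescaling $S$ and $X$. On $\{W\le 0\}$ the two estimators coincide, so the risk difference is
\[
R(\delta_0)-R(\delta_\varphi)=E\big[\{L(c_0S)-L(\varphi(W)S)\}I(W>0)\big].
\]
By (a), $\varphi(\infty)=c_0$, so on $\{W>0\}$ we can write
\[
L(c_0S)-L(\varphi(W)S)=\int_W^\infty \frac{d}{dt}L(\varphi(t)S)\,dt=\int_W^\infty \varphi'(t)\,S\,L'(\varphi(t)S)\,dt .
\]
Conditionally on $\tau$, put $S=\rho(\tau)v$ and $X=\rho(\tau)x$. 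The event $\{W<t,\ W>0\}$ is $\{0<x<tv\}$, which has conditional probability $H(tv;\lambda(\tau))$. Using Fubini, the risk difference becomes
\[
\int\!\!\int_0^\infty \varphi'(t)\int_0^\infty \rho(\tau)\,v\,L'\big(\varphi(t)\rho(\tau)v\big)\,g(v)\,H(tv;\lambda(\tau))\,dv\,dt\,dG(\tau).
\]
Since $\varphi'\ge 0$, it suffices to show that for each $t>0$,
\[
J(t):=\int\rho(\tau)\int_0^\infty vL'\big(\varphi(t)\rho(\tau)v\big)g(v)H(tv;\lambda(\tau))\,dv\,dG(\tau)\ge 0 .
\]

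The next step is to replace $H(tv;\lambda(\tau))$ by $H(tv)$ and reduce to condition (b). Write $H(tv;\lambda(\tau))=H(tv)\,r_\tau(tv)$, where $r_\tau=H(\cdot;\lambda(\tau))/H(\cdot)$ is nondecreasing by (B1). The integrand $vL'(\varphi\rho v)g(v)H(tv)$ is negative for small $v$ and positive for large $v$, with a single sign change at $v_0=1/(\varphi(t)\rho(\tau))$, because $L'(s)$ (or $sL'(s)$) is increasing with $L'(1)=0$. A nondecreasing weight puts more mass where the integrand is positive, so the standard sign-change inequality gives
\[
\int vL'(\cdot)g\,H\,r_\tau\,dv\ \ge\ r_\tau(tv_0)\int vL'(\cdot)g\,H\,dv.
\]

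The main obstacle is that $v_0$, and hence the constant $r_\tau(tv_0)$, depends on $\tau$. Condition (b) is an average over $\tau$, so a single $\tau$-free constant is needed to factor out. Here (B2) and the monotonicity of $\rho$ come in. Since $\rho$ is decreasing, $v_0(\tau)$ increases with $\tau$. Meanwhile $r_\tau(x)$ is nonincreasing in $\tau$ by (B2) and nondecreasing in $x$ by (B1). This competition must be handled by a two-level sign-change argument on the inner integrals
\[
I(\tau)=\rho(\tau)\int_0^\infty vL'\big(\varphi(t)\rho(\tau)v\big)g(v)H(tv)\,dv .
\]
As a function of $\tau$, $I(\tau)$ changes sign at most once, from positive to negative. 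This holds because increasing $\tau$ decreases $\rho(\tau)$, which decreases $L'(\varphi\rho v)$ pointwise, using the monotonicity of $L'$ or of $sL'(s)$. We then need a $\tau$-free lower bound for the weighted quantity at each $\tau$, with a bounding factor that is nonincreasing in $\tau$. Applying the second-level sign-change inequality to $\int(\text{factor})\,I(\tau)\,dG(\tau)$ then bounds it below by a positive constant times $\int I(\tau)\,dG(\tau)$, which is nonnegative by (b).

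I would first try the bound $\int vL'gHr_\tau\,dv\ge r_\tau(tv_0(\tau))\,I(\tau)/\rho(\tau)$. I would then check whether $\tau\mapsto r_\tau(tv_0(\tau))$ is nonincreasing, and if not, replace it by the cruder choice $r_{\tau^*}(tv_0(\tau^*))$ evaluated at the sign-change point $\tau^*$ of $I$. This also requires care in the convex-in-log case, where $L'$ should be replaced by $sL'(s)$ throughout.
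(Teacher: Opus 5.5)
Your reduction to $J(t)\ge 0$ is correct; it is the paper's $A(u)$. Your first-level sign-change bound and the single sign change of $I(\tau)$ are also valid. The gap is at the step you flag yourself, and neither of your remedies closes it.

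The two-level argument needs $c(\tau)=r_\tau(tv_0(\tau))$ to satisfy $c(\tau)\ge c(\tau^*)$ for $\tau<\tau^*$ and $c(\tau)\le c(\tau^*)$ for $\tau>\tau^*$. But for $\tau<\tau^*$, (B2) gives $r_\tau(tv_0(\tau))\ge r_{\tau^*}(tv_0(\tau))$. Meanwhile (B1), together with $v_0(\tau)\le v_0(\tau^*)$, gives $r_{\tau^*}(tv_0(\tau))\le r_{\tau^*}(tv_0(\tau^*))$. The two inequalities point in opposite directions, so nothing follows. In general (B1) and (B2) do not control the composite $\tau\mapsto r_\tau(tv_0(\tau))$.

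Your ``cruder'' constant $r_{\tau^*}(tv_0(\tau^*))$ requires exactly the same comparison, so it is not a real fallback. As written, the argument never reaches condition (b).

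The missing idea is to reverse the order: for fixed $v$, decouple in $\tau$ first, and only then use (B1). In the convex-in-log case, $\tau\mapsto\rho(\tau)vL'(\varphi(t)\rho(\tau)v)$ is decreasing, and $\tau\mapsto H(tv;\lambda(\tau))$ is nonincreasing by (B2). The covariance inequality under $G$ then gives
\[
J(t)\ge\int_0^\infty g(v)H(tv)f_1(v)f_2(v)\,dv,\qquad
f_1(v)=\int\rho(\tau)vL'\big(\varphi(t)\rho(\tau)v\big)\,dG(\tau),\quad
f_2(v)=\int r_\tau(tv)\,dG(\tau).
\]
Now $f_2$ is $\tau$-free and nondecreasing by (B1), and $f_1$ is increasing. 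A second covariance inequality, with respect to the density proportional to $g(v)H(tv)$, gives $J(t)\ge \int gHf_1\int gHf_2\big/\int gH$, which is nonnegative by (b). No sign-change point, and hence no $\tau$-dependent constant, ever appears.

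When $L'$ (rather than $sL'(s)$) is increasing, run the same two steps with weights $\rho(\tau)\,dG(\tau)$ and $v\,g(v)H(tv)\,dv$. In that case it is $vL'(\varphi(t)\rho(\tau)v)$ that is monotone in $\tau$. This is precisely the paper's proof.
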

	\begin{proof}
	We employ \cite{Kubokawa1994}'s approach. Let $f(y, x;\lambda)$	be the joint (unconditional) density of $S/\sigma$ and $X/\sigma$, that is, $f(y,x;\lambda)=\int_{0}^{\infty}(1/\rho(\tau))^2 g(y/\rho(\tau))h( x/\rho(\tau);\allowbreak\lambda) I( x/\rho(\tau)>\kappa(\lambda)) dG(\tau)$. The difference in risks between $\delta_0$ and $\delta_{\varphi}$ is 
	  	\begin{align*}
	  RD&=EL(c_0S/\sigma)I(W>0) -EL(\varphi(W)S/\sigma)I(W>0) \\ &=	\int_{1}^{\infty}\frac{d}{dt}EL(\varphi(tW)S/\sigma)I(W>0) dt \\
	  &=	\int_{1}^{\infty}EL'(\varphi(tW)S/\sigma)(S/\sigma)W\varphi'(tW)I(W>0) dt\\
	  &=\int_{1}^{\infty}\int_{0}^{\infty}\int_{0}^{\infty}L'(\varphi(tw)y)y^2w\varphi'(tw)f(y,wy;\lambda) dw dy dt \\
	  &=\int_{0}^{\infty}\int_{0}^{\infty}\int_{w}^{\infty}L'(\varphi(u)y)y^2\varphi'(u)f(y,wy;\lambda) du dy dw \\
	  &=\int_{0}^{\infty}\varphi'(u)\int_{0}^{\infty}\int_{0}^{u}L'(\varphi(u)y)y^2f(y,wy;\lambda) dw dy du 
	  \end{align*}
	  \begin{equation}\label{general f}
	  =\int_{0}^{\infty}\varphi'(u)\int_{0}^{\infty}L'(\varphi(u)y)y\int_{0}^{uy}f(y,x;\lambda) dx dy du.
	  \end{equation}
	  We need to show that $RD\geq0$. Since, by condition $(a)$,  $\varphi'(u)\geq0$, it suffices to show
	  \begin{equation}\label{inequality}
	  A(u)=\int_{0}^{\infty}L'(\varphi(u)y)y\int_{0}^{uy}f(y,x;\lambda) dx dy \geq0,
	  \end{equation}
	  
	     for $u>0$. Substituting $f$ from above, we get
    \begin{align*}
    A(u)&=\int_{0}^{\infty}L'(\varphi(u)y)y\int_{0}^{uy}\int_{0}^{\infty}\big(1/\rho(\tau)\big)^2g( y/\rho(\tau))h(x/\rho(\tau);\lambda)\\
    &\qquad\qquad\qquad\times I( x/\rho(\tau)>\kappa(\lambda)) dG(\tau) dx dy \\
    &=\int_{0}^{\infty}L'(\varphi(u)y)y\int_{0}^{\infty}\big(1/\rho(\tau)\big) g( y/\rho(\tau))H( yu/\rho(\tau);\lambda) dG(\tau) dy\\
    &=\int_{0}^{\infty}g(v) \int_{0}^{\infty}\rho(\tau)v L'(\varphi(u)\rho(\tau)v) H(uv;\lambda(\tau)) dG(\tau) dv.
    \end{align*}
    Now let $tL'(t)$ be  increasing . Then $\rho(\tau)vL'(\varphi(u)\rho(\tau)v) $ is decreasing in $\tau$ (recall that $\rho(\tau)$ is decreasing)  and $H(uv;\lambda(\tau))$ is nonincreasing in $\tau$ by condition $(B2)$. Hence, by the covariance inequality (cf. \cite{LehmannCasella}, p.370) we get
    \begin{align*}
    A(u)&\geq\int_{0}^{\infty}g(v) \int_{0}^{\infty}\rho(\tau)vL'(\varphi(u)\rho(\tau)v ) dG(\tau) \int_{0}^{\infty}H(uv;\lambda(\tau)) dG(\tau) dv\\
    &=\int_{0}^{\infty}g(v)H(uv) \int_{0}^{\infty}\rho(\tau)vL'(\varphi(u)\rho(\tau)v)  dG(\tau) \int_{0}^{\infty}\frac{H(uv;\lambda(\tau))}{H(uv)} dG(\tau) dv \\
    &=\int_{0}^{\infty}g(v)H(uv)f_1(v)f_2(v) dv,
    \end{align*}
    where $f_1(v)=\int_{0}^{\infty}\rho(\tau)vL'(\varphi(u)\rho(\tau)v)  dG(\tau)$ and $f_2(v)=\int_{0}^{\infty}(H(uv;\lambda(\tau))/H(uv))\allowbreak dG(\tau)$. Then $f_1(v)$ is increasing since $tL'(t)$ is  increasing and $f_2(v)$ is nondecreasing by condition $(B1)$. Applying once again the covariance inequality, with respect to a density proportional to $g(v)H(uv)$,
    we get
    \begin{align*}
    A(u)&\geq\int_{0}^{\infty}g(v)H(uv)f_1(v) dv\int_{0}^{\infty}g(v)H(uv)f_2(v) dv\bigg/\int_{0}^{\infty}g(v)H(uv) dv.
    \end{align*}
    The last two integrals are positive. Thus, we will have $A(u)\geq0$ provided that the first one is nonnegative, that is, \\ $\int_{0}^{\infty}g(v)H(uv)f_1(v) dv =\int_{0}^{\infty}\rho(\tau)\int_{0}^{\infty}L'(\varphi(w)\rho(\tau)v)vg(v)H(wv) dv dG(\tau)\geq0$, which is condition $(b)$. So \eqref{inequality} holds true, and this completes the proof in the case $tL'(t)$ is increasing. 
    
    In the case that $L'(t)$ is increasing, the proof that $A(u)\geq0$ is similar and therefore it is omitted. Note that the covariance inequality will be applied with respect to the probability measure $\rho(\tau)dG(\tau)/\int_{0}^{\infty}\rho(\tau) dG(\tau)$,  and then with respect to $vg(v)H(uv)/\int_{0}^{\infty}vg(v)H(uv) dv$.
   	\end{proof}
	
	\begin{rem}\label{Remark2.1}
		In the non-degenerate case, whether the condition of  convexity  or convexity in log of the loss may be relaxed  to (strictly) bowl-shaped loss is an issue of interest.	In this respect, the part of the proof of Theorem \ref{thm2.1} up to 
		\eqref{inequality} holds for any joint density $f(y,x;\lambda)$ of $S/\sigma$ and $X/\sigma$. Thus, assuming that $L(t)$ is  bowl-shaped and $\int_{0}^{uy}f(y,x;\lambda) dx/\int_{0}^{uy}f(y,x; \lambda_0) dx$ is nondecreasing in $y$, $\delta_\varphi$ improves on $\delta_0$ provided conditions (a) and (b'')  $\int_{0}^{\infty}L'(\varphi(u)y)y\int_{0}^{uy}f(y,x;\lambda_0) dx dy\geq 0$ hold. For the scale mixtures framework, substituting $f$ as given in the beginning of the proof of Theorem \ref{thm2.1}, the last ratio of integrals becomes $\int_{0}^{\infty}(1/\rho(\tau)) g( y/\rho(\tau))H( yu/\rho(\tau);\lambda(\tau))\allowbreak dG(\tau)/ \int_{0}^{\infty}(1/\rho(\tau)) g( y/\rho(\tau))H( yu/\rho(\tau)) dG(\tau)$ and we need this to be  
		nondecreasing in $y$ for $\delta_{\varphi}$ to improve on $\delta_0$.
		  We have not pursued the monotonicity of this ratio any further.
	\end{rem}
	
	 When $G(\cdot)$ is degenerate at $\tau=1$, 	Theorem \ref{thm2.1} becomes \cite{Kubokawa1994}'s  Theorem 2.2, but with a stronger condition on $L(t)$.
	Condition (B2) is trivially satisfied (and in the proof of Theorem \ref{thm2.1}, the first application of the covariance inequality results in equality).  As is well known, the convexity condition is not really needed, and  may be relaxed to strictly bowl-shaped requiring,  though,  an mlr property for $g$, namely $(A1)$, to ensure existence and uniqueness of $c_{01}$ below. For the sake of uniformity with Theorem \ref{thm2.1}, we state the result for  the case $\tau=1$ wp1 also including the convexity conditions. Let $c_{01}$ be the solution to 
	\begin{equation}\label{c01}
	E\{L'(c_{01}S)S\}=0
	\end{equation}
	where the expectation is evaluated when $\tau=1$ wp1 and $\sigma=1$, and set
	\begin{equation}\label{delta01}
	\delta_{01}=c_{01}S,
	\end{equation}
	
	\begin{equation}\label{phi1}
	\delta_{1\varphi}= \begin{cases}
	\varphi_1(W)S & \text{if}\ W>0,\\
	\delta_{01} & \text{otherwise},
	\end{cases}
	\end{equation}
	where $\varphi_1$ is absolutely continuous and positive on ($0,\infty)$.
	\begin{thm}\label{degenerate}\textit{(the degenerate model, \cite{Kubokawa1994})} Let $\tau=1$ wp1. Let  $L(t)$ be  strictly bowl-shaped (which in particular holds when 
	$L'(t)$ or  $tL'(t)$ is increasing). Assume $(B1)$, $(A1)$ and the following conditions for $\varphi_1$:
		
		(a') $\varphi_1(w)$ is nondecreasing and $ \lim_{w\to\infty}\varphi_1(w)=c_{01}$.
		
		(b') $\int_{0}^{\infty}L'(\varphi_1(w)v)vg(v)H(wv) dv\geq0$.
		
		Then $\delta_{1\varphi}$ improves on $\delta_{01}$.
	\end{thm}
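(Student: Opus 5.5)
The plan is to follow the risk-difference computation in the proof of Theorem \ref{thm2.1} verbatim. That argument, up to \eqref{inequality}, holds for any joint density $f(y,x;\lambda)$ (Remark \ref{Remark2.1}). With $\tau=1$ wp1 we have $f(y,x;\lambda)=g(y)h(x;\lambda)I(x>\kappa(\lambda))$. Writing $\varphi_1$ in place of $\varphi$ and $c_{01}$ in place of $c_0$, condition (a') and the limit $\varphi_1(\infty)=c_{01}$ give
\[
RD=\int_0^\infty \varphi_1'(u)A(u)\,du, \qquad A(u)=\int_0^\infty L'(\varphi_1(u)v)\,v\,g(v)\,H(uv;\lambda)\,dv .
\]
Since $\varphi_1'\ge 0$, it suffices to show $A(u)\ge 0$ for each $u>0$. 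When $L'(t)$ or $tL'(t)$ is increasing, I would just use the covariance inequality as in the theorem: one step, with respect to the density proportional to $g(v)H(uv)$ (or $vg(v)H(uv)$), pairing $vL'(\varphi_1(u)v)$ with $H(uv;\lambda)/H(uv)$, which is nondecreasing by (B1). Then (b') closes the argument.

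For a merely strictly bowl-shaped $L$ I would use Kubokawa's sign-change argument instead. Write
\[
A(u)=\int_0^\infty L'(\varphi_1(u)v)\,v\,g(v)H(uv)\,\frac{H(uv;\lambda)}{H(uv)}\,dv .
\]
Because $L$ is strictly bowl-shaped with minimum at $1$, the factor $L'(\varphi_1(u)v)$ is negative for $v<v_0:=1/\varphi_1(u)$ and positive for $v>v_0$. The ratio $r(v)=H(uv;\lambda)/H(uv)$ is nondecreasing by (B1), so $r(v)\le r(v_0)$ on the negative part and $r(v)\ge r(v_0)$ on the positive part. This gives
\[
A(u)\ge r(v_0)\int_0^\infty L'(\varphi_1(u)v)\,v\,g(v)H(uv)\,dv \ge 0 ,
\]
using (b') and $r(v_0)\ge 0$.

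The remaining step, and in my view the only real obstacle, is the boundary bookkeeping: justifying that the first line of the risk-difference computation equals $\int_1^\infty \frac{d}{dt}EL(\varphi_1(tW)S)\,dt$. This needs $\lim_{t\to\infty}\varphi_1(tW)=c_{01}$, so that the upper limit reproduces $\delta_{01}$ on $\{W>0\}$, and it needs $\delta_{01}$ to be well defined. Here (A1) enters. Under the mlr property of $g$, the function $c\mapsto E\{L'(cS)S\}$ changes sign exactly once, from negative to positive, for bowl-shaped $L$. I would prove this by the same single-crossing argument: for $c_1<c_2$, the density ratio $g(v/c_2)/g(v/c_1)$ is monotone. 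Hence $c_{01}$ in \eqref{c01} exists and is unique. On $\{W\le 0\}$ both estimators coincide, so that event contributes nothing.

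Interchanges of derivative and integral are covered by the standing assumptions. Combining these steps gives $RD\ge 0$, with strict inequality unless $\varphi_1$ is constant, so $\delta_{1\varphi}$ improves on $\delta_{01}$.
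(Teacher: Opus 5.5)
Your proposal is correct and follows essentially the paper's (that is, Kubokawa's) route. You specialize the risk-difference identity from the proof of Theorem~\ref{thm2.1} to $\tau=1$, use a one-crossing argument that combines the sign change of $L'(\varphi_1(u)v)$ at $v=1/\varphi_1(u)$ with (B1) and (b'), and use (A1) only to secure existence and uniqueness of $c_{01}$.

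One correction: drop the closing claim of strict inequality unless $\varphi_1$ is constant. Whenever (b') holds with equality, as for the BZ-type $\varphi_{2BZ}$, one gets $A(u)\equiv 0$ at $\lambda=\lambda_0$ and hence $RD=0$ there. What you have actually shown is $RD\ge 0$, which is also all the paper's argument establishes.
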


    We will show next that the construction of estimators  of $\sigma$  satisfying the conditions of Theorem \ref{thm2.1} reduces to the construction of estimators of $\sigma$   satisfying the conditions of Theorem \ref{degenerate}. To this end, we will employ Iliopoulos(1999)'s condition \eqref{Ilio}.
  
     Let $c_\tau$ be the unique solution to the equation
    \begin{equation}\label{best t}
    E\{L'(c_\tau \rho(\tau))\rho(\tau)\}=0, 
    \end{equation}

  In the theorem below, convexity of the loss or convexity in log is used only to ensure existence and uniqueness in \eqref{best t} and \eqref{c01}.
    \begin{thm}\label{thm2.3}
    Let $L'(t)$ or  $tL'(t)$ be increasing, and assume that  $L(t)$ satisfies \eqref{Ilio}.
     Assume that  $E\{\lambda_1(c_\tau \rho(\tau))\rho(\tau)\}$	and $E\{(\lambda_2(c_{01}S/(\rho(\tau)\sigma))\big(S/(\rho(\tau)\sigma)\big)\}$ are finite.
     Then it holds  $c_0=c_\tau c_{01}$ or, equivalently, $\delta_0=c_\tau \delta_{01}$.
    \end{thm}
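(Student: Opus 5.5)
Since $c_0$ is defined as the unique root of \eqref{bestconstant}, and existence and uniqueness follow from the convexity assumption, it is enough to check that $c=c_\tau c_{01}$ satisfies \eqref{bestconstant}, that is, $E\{L'(c_\tau c_{01}S)S\}=0$ with $\sigma=1$.

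\emph{Step 1: conditional representation.} Take $\sigma=1$. Under the mixture model, $S=\rho(\tau)V$, where $V$ has density $g$ and is independent of $\tau\sim G$. So
\begin{equation*}
E\{L'(c_\tau c_{01}S)S\}=E\{L'\big(c_\tau\rho(\tau)\cdot c_{01}V\big)\rho(\tau)V\}.
\end{equation*}
Here $V=S/(\rho(\tau)\sigma)$ has the law of $S$ in the degenerate model. Hence \eqref{c01} reads $E\{L'(c_{01}V)V\}=0$.

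\emph{Step 2: apply \eqref{Ilio}.} Use \eqref{Ilio} with $x=c_\tau\rho(\tau)$ and $y=c_{01}V$:
\begin{equation*}
L'(c_\tau\rho(\tau)c_{01}V)=\lambda_1(c_\tau\rho(\tau))L'(c_{01}V)+\lambda_2(c_{01}V)L'(c_\tau\rho(\tau)).
\end{equation*}
Multiply by $\rho(\tau)V$ and take expectations. Independence of $\tau$ and $V$ makes each term factor:
\begin{equation*}
E\{\lambda_1(c_\tau\rho(\tau))\rho(\tau)\}\,E\{L'(c_{01}V)V\}+E\{\lambda_2(c_{01}V)V\}\,E\{L'(c_\tau\rho(\tau))\rho(\tau)\}.
\end{equation*}
The first term vanishes by \eqref{c01}. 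The second vanishes by the definition \eqref{best t} of $c_\tau$. Both products are of the form $(\text{finite})\times 0$ because of the finiteness hypotheses in the statement of Theorem \ref{thm2.3}. The expectations $E\{L'(c_{01}V)V\}$ and $E\{L'(c_\tau\rho(\tau))\rho(\tau)\}$ are finite by the standing integrability assumption of Section \ref{sect scale mixtures}.

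\emph{Step 3: conclude.} The expectation is therefore zero, so $c_\tau c_{01}$ solves \eqref{bestconstant}. Uniqueness then gives $c_0=c_\tau c_{01}$, and hence $\delta_0=c_0S=c_\tau c_{01}S=c_\tau\delta_{01}$.

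The main obstacle is justifying the factorization in Step 2. It needs the exact independence of $\tau$ and $V$ in the representation $S=\rho(\tau)V$. It also needs the integrability of each term separately, and this is exactly what the two finiteness hypotheses provide. A secondary point is that $c_\tau$ must exist and be unique. This follows because $L'(t)$ or $tL'(t)$ is increasing, so $c\mapsto E\{L'(c\rho(\tau))\rho(\tau)\}$ is strictly increasing and changes sign, in the same way as for $c_0$.
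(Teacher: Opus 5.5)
Your proof is correct and is essentially the paper's argument. The paper uses the same independent factorization $S/\sigma=\rho(\tau)\cdot S/(\rho(\tau)\sigma)$ and then cites Theorem 1 of Bobotas (2012) with $S_1=1/\rho(\tau)$ and $S_2=S/(\rho(\tau)\sigma)$; that theorem's content is exactly the \eqref{Ilio}-plus-independence factorization you carry out explicitly.

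One minor slip concerns your side remark on $c_\tau$. When only $tL'(t)$ is increasing, the map $c\mapsto E\{L'(c\rho(\tau))\rho(\tau)\}$ need not be monotone (e.g.\ for $L(t)=(\ln t)^2$). However, $c\,E\{L'(c\rho(\tau))\rho(\tau)\}=E\{c\rho(\tau)L'(c\rho(\tau))\}$ is increasing, and it has the same root, so uniqueness still holds. Existence of the root is in any case assumed by the paper when it defines $c_\tau$.
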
	
\begin{proof}
 Write $S/\sigma= \big(S/(\rho(\tau)\sigma)\big)\rho(\tau)$ and observe  that $\tau$ (and thus $\rho(\tau)$)	 and $ S/(\rho(\tau)\sigma)$ are independent, since given $\tau >0$ the density of $ S/(\rho(\tau)\sigma)$, $g(v)$,  does not involve $\tau $. Then Theorem \ref{thm2.3}  is merely Theorem 1 in  \cite{Bobotas2012} with $S_1=1/\rho(\tau)$ and $S_2 = S/(\rho(\tau)\sigma)$.
\end{proof}

Observe that $E\{(\lambda_2(c_{01}S/(\rho(\tau)\sigma))\big(S/(\rho(\tau)\sigma)\big)\}
=E\{\lambda_2(c_{01}S)S\}$, where the last expectation is evaluated when $\tau=1$ wp1 and $\sigma=1$. With regard to the   finiteness  of the expectations in Theorem \ref{thm2.3}, we refer to  Table 1 in \cite{Bobotas2012}, e.g., they hold for the entropy and Brown's losses.

 For our next result we need the following lemma. (Again  convexity does not play an essential role, the lemma holds with $L$ being strictly bowl-shaped and assuming existence - uniqueness of $c_\tau$, $c_{01}$.)
 
\begin{lem}\label{lemma} 
Assume the conditions of Theorem \ref{thm2.3}. Then $E\{\lambda_1(c_\tau\rho(\tau) )\rho(\tau)\}>0$.
\end{lem}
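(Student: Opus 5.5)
We use Iliopoulos' identity \eqref{Ilio} to express $L'$ along the factorization $S/\sigma=T\,\rho(\tau)$, exactly as in the proof of Theorem \ref{thm2.3}. Put $T=S/(\rho(\tau)\sigma)$. Then $T$ is independent of $\tau$, and $T$ has density $g$, i.e. the distribution of $S$ when $\tau=1$ wp1 and $\sigma=1$.

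First apply \eqref{Ilio} with $x=c_\tau\rho(\tau)$ and $y=c_{01}T$:
\[
L'(c_\tau c_{01}\rho(\tau)T)=\lambda_1(c_\tau\rho(\tau))L'(c_{01}T)+\lambda_2(c_{01}T)L'(c_\tau\rho(\tau)).
\]
Next, the key step is to pick a test function for which the second term vanishes in expectation while the first term keeps a definite sign. Multiply both sides by $\rho(\tau)\,T\,\psi(T)$, where $\psi$ is a function of $T$ alone, and take expectations. Independence factorizes the second term as
\[
E\{\lambda_2(c_{01}T)T\psi(T)\}\,E\{L'(c_\tau\rho(\tau))\rho(\tau)\}=0
\]
by \eqref{best t}. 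We are left with
\[
E\{L'(c_0 S/\sigma)\rho(\tau)T\psi(T)\}=E\{\lambda_1(c_\tau\rho(\tau))\rho(\tau)\}\,E\{L'(c_{01}T)T\psi(T)\},
\]
using $c_0=c_\tau c_{01}$ from Theorem \ref{thm2.3}.

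Now choose $\psi$ to fix signs. Strict bowl-shape means $L'(t)<0$ for $t<1$ and $L'(t)>0$ for $t>1$, so $L'(c_{01}t)\,t$ is negative for $t<1/c_{01}$ and positive afterwards. Take $\psi(t)=I(t>1/c_{01})$, or any positive increasing function of $t$. With the indicator, $E\{L'(c_{01}T)T\psi(T)\}>0$ because $T$ has a positive density on $(0,\infty)$. On the left, $\rho(\tau)T\psi(T)=(S/\sigma)\psi(T)$, and the integrand $L'(c_0\rho(\tau)T)\rho(\tau)T$ is positive exactly when $\rho(\tau)T>1/c_0$. The difficulty is that the event $\{T>1/c_{01}\}$ does not coincide with $\{\rho(\tau)T>1/c_0\}$, so the left side need not be positive.

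To get around this, I would instead take $\psi$ increasing, for instance $\psi(t)=t^k$ or an indicator, and use the defining equation $E\{L'(c_{01}T)T\}=0$ from \eqref{c01}. By the covariance inequality, $E\{L'(c_{01}T)T\psi(T)\}\ge 0$, with strict inequality for strictly increasing $\psi$; this holds since $L'(c_{01}t)$ changes sign once from negative to positive. On the left, conditioning on $\tau$ gives
\[
E\bigl[\,E\{L'(c_0\rho(\tau)T)\rho(\tau)T\psi(T)\mid\tau\}\,\bigr].
\]
Its sign is not immediate, which is the main obstacle.

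A cleaner route, and the one I would try first, is to exploit symmetry between the factors. Apply \eqref{Ilio} the other way round, with $x=c_{01}T$ and $y=c_\tau\rho(\tau)$:
\[
L'(\cdot)=\lambda_1(c_{01}T)L'(c_\tau\rho(\tau))+\lambda_2(c_\tau\rho(\tau))L'(c_{01}T).
\]
Comparing the two expansions and setting particular values, such as $x=1$ and $y=1$ (where $L'(1)=0$), yields functional relations. These include $L'(y)=\lambda_1(1)L'(y)$, so $\lambda_1(1)=1$, and $L'(x)=\lambda_2(1)L'(x)$, so $\lambda_2(1)=1$. Combined with known solutions of \eqref{Ilio}, namely $\lambda_1(x)=x^a$ up to the forms listed in \cite{Bobotas2012}, this should show that $\lambda_1$ is positive. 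Then $E\{\lambda_1(c_\tau\rho(\tau))\rho(\tau)\}>0$ is immediate. I expect that establishing $\lambda_1>0$ from \eqref{Ilio} alone, without the explicit classification, is the crux. For that I would differentiate \eqref{Ilio} at $y=1$ and solve the resulting ODE for $\lambda_1$, using that $L'$ changes sign at $1$.
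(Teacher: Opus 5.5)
\textbf{There is a genuine gap: the proposal does not reach a proof.} Your first two routes stop exactly where the work is (``its sign is not immediate''). The third route rests on proving $\lambda_1>0$ pointwise, and this cannot follow from \eqref{Ilio} because it is false in general.

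The pair in \eqref{Ilio} is not unique. If $(\lambda_1,\lambda_2)$ works, so does $(\lambda_1+kL',\,\lambda_2-kL')$ for any constant $k$, since the added terms $\pm kL'(x)L'(y)$ cancel. For Brown's loss, $L'(t)=2\ln t/t$ and the natural choice is $\lambda_1(x)=\lambda_2(x)=1/x$. The shifted choice gives $\lambda_1(x)=(1+2k\ln x)/x$, which is negative for $x<e^{-1/(2k)}$ when $k>0$. The finiteness hypotheses of Theorem~\ref{thm2.3} still hold for it. The lemma survives for this choice only because the extra contribution $kE\{L'(c_\tau\rho(\tau))\rho(\tau)\}$ vanishes by \eqref{best t}.

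So the defining equation of $c_\tau$ must enter essentially. Neither a classification of solutions nor your ODE at $y=1$ can give the sign pointwise. That ODE has solution $\lambda_1(x)=[xL''(x)-\lambda_2'(1)L'(x)]/L''(1)$, which depends on the arbitrary number $\lambda_2'(1)$.

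The missing idea is to keep $y$ deterministic and average over $\tau$ only. Apply \eqref{Ilio} with $x=c_\tau\rho(\tau)$ and a fixed $y>0$, multiply by $\rho(\tau)$, take expectations and use \eqref{best t}:
\[
K(y):=E\{L'(c_\tau\rho(\tau)y)\rho(\tau)\}=E\{\lambda_1(c_\tau\rho(\tau))\rho(\tau)\}\,L'(y),\qquad y>0 .
\]
By \eqref{best t}, $K(1)=0$.
\begin{itemize}
\item If $L'$ is increasing, $K$ is strictly increasing.
\item If $tL'(t)$ is increasing, then $yK(y)=c_\tau^{-1}E\{(c_\tau\rho(\tau)y)\,L'(c_\tau\rho(\tau)y)\}$ is strictly increasing.
\end{itemize}
Either way $K(y)>0$ for $y>1$, where also $L'(y)>0$. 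Hence $E\{\lambda_1(c_\tau\rho(\tau))\rho(\tau)\}=K(y)/L'(y)>0$.

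This is precisely what blocked your first route. Conditionally on $T=t$, the $\tau$-average of $L'(c_0\rho(\tau)T)\rho(\tau)$ is $K(c_{01}t)$, which is positive for $t>1/c_{01}$. So your indicator $\psi(t)=I(t>1/c_{01})$ does make the left side positive; note that $P(T>1/c_{01})>0$, since otherwise \eqref{c01} would fail. You were trying to read the sign off the integrand pointwise instead of off this average. In fact $T$, $c_{01}$ and $c_0=c_\tau c_{01}$ are not needed at all. For comparison, the paper gives no argument of its own here: it simply invokes Lemma~1 of \cite{Bobotas2012} with $S_1=1/\rho(\tau)$ and $S_2=S/(\rho(\tau)\sigma)$.
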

\begin{proof}
Setting, as in the proof of Theorem \ref{thm2.3}, $S_1=1/\rho(\tau)$ and $S_2 = S/(\rho(\tau)\sigma)$, Lemma \ref{lemma} is merely part of Lemma 1 in  \cite{Bobotas2012}.
\end{proof}	
\begin{thm}\label{reddegen}
Assume the conditions of Theorem \ref{thm2.3}. Also, assume (B1) and (B2). 
For $\delta_{1\varphi}$ as in \eqref{phi1},
 set $\delta=c_\tau \delta_{1\varphi}$,  so $\delta=\delta_\varphi$ as in \eqref{phi} with $\varphi(w)=c_\tau \varphi_1(w)$. Then $\delta$
	 improves on $\delta_0$, satisfying the conditions (a) and (b) of Theorem \ref{thm2.1} if and only if $\delta_{1\varphi}$ improves on $\delta_{01}$, satisfying the conditions (a') and (b') of Theorem \ref{degenerate}.

\end{thm}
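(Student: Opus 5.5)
Set $\varphi=c_\tau\varphi_1$ and verify separately that (a) is equivalent to (a') and that (b) is equivalent to (b'). Theorem \ref{thm2.1} (under (B1), (B2)) then gives the improvement of $\delta=\delta_\varphi$ over $\delta_0$, and Theorem \ref{degenerate} gives that of $\delta_{1\varphi}$ over $\delta_{01}$. One point needs care: Theorem \ref{degenerate} also assumes (A1). I read the equivalence in the statement as an equivalence of the conditions (a),(b) with (a'),(b'), and the improvement claims then follow from the respective theorems. Note also that on $\{W\le 0\}$ we have $\delta=c_\tau\delta_{01}=\delta_0$ by Theorem \ref{thm2.3}, so $\delta$ has exactly the form \eqref{phi}.

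\emph{Condition (a).} Since $c_\tau>0$, $\varphi$ is nondecreasing if and only if $\varphi_1$ is. Moreover $\lim_{w\to\infty}\varphi(w)=c_\tau\lim_{w\to\infty}\varphi_1(w)$, and this equals $c_0$ if and only if $\lim_{w\to\infty}\varphi_1(w)=c_0/c_\tau=c_{01}$, by Theorem \ref{thm2.3}.

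\emph{Condition (b).} This is the core step. Apply \eqref{Ilio} with $x=c_\tau\rho(\tau)$ and $y=\varphi_1(w)v$:
\[
L'\big(\varphi(w)\rho(\tau)v\big)=\lambda_1\big(c_\tau\rho(\tau)\big)L'\big(\varphi_1(w)v\big)+\lambda_2\big(\varphi_1(w)v\big)L'\big(c_\tau\rho(\tau)\big).
\]
Multiply by $\rho(\tau)vg(v)H(wv)$ and integrate over $v$ and $\tau$; Fubini applies under the finiteness assumptions. The left side of (b) then splits into two terms.

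The first term is
\[
E\{\lambda_1(c_\tau\rho(\tau))\rho(\tau)\}\int_0^\infty L'(\varphi_1(w)v)vg(v)H(wv)\,dv .
\]
The second term is
\[
\int_0^\infty \lambda_2(\varphi_1(w)v)vg(v)H(wv)\,dv\cdot E\{L'(c_\tau\rho(\tau))\rho(\tau)\}.
\]
The second term vanishes by the definition \eqref{best t} of $c_\tau$. By Lemma \ref{lemma} the factor $E\{\lambda_1(c_\tau\rho(\tau))\rho(\tau)\}$ is strictly positive. Hence the integral in (b) is nonnegative if and only if the integral in (b') is nonnegative, for every $w$.

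The main obstacle is justifying that both pieces are finite, so that the split is legitimate; this is also where the theorem's integrability hypotheses are used. The $\tau$-parts are covered directly by the assumed finiteness of $E\{\lambda_1(c_\tau\rho(\tau))\rho(\tau)\}$ and by \eqref{best t}. For the $v$-part I would use $0\le H(wv)\le 1$ to bound $\int_0^\infty|\lambda_2(\varphi_1(w)v)|vg(v)H(wv)\,dv$ by an expectation of $|\lambda_2|$ at rescaled arguments. Taking this to be finite is in the spirit of the general standing assumption that the relevant integrals are finite, just as was done for $\lambda_2(c_{01}S)S$ in Theorem \ref{thm2.3}.
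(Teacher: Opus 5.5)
Your proof is correct and follows the paper's own argument: (a)$\Leftrightarrow$(a') comes from $c_0=c_\tau c_{01}$, and (b)$\Leftrightarrow$(b') comes from splitting via \eqref{Ilio}, killing the $\lambda_2$-term with \eqref{best t}, and using Lemma \ref{lemma} to obtain $D=D'\,E\{\lambda_1(c_\tau\rho(\tau))\rho(\tau)\}$ with a positive factor. Your concern about (A1) does not arise, since under the convexity assumptions of Theorem \ref{thm2.3} the degenerate improvement already follows from Theorem \ref{thm2.1} with $G$ degenerate at $\tau=1$.
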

\begin{proof}
We prove the result by showing that conditions (a), (b) of Theorem \ref{thm2.1} for $\varphi(w)$ are equivalent to conditions (a'), (b') of Theorem \ref{degenerate} for $\varphi_1(w)$. Since $\varphi(w)=c_\tau \varphi_1(w)$ and, by Theorem \ref{thm2.3}, $c_0=c_\tau c_{01}$, conditions (a) and (a')  are clearly equivalent. Denote by $D$, $D'$ the left-hand sides of conditions (b), (b'), respectively. Then, we get
\begin{align*}
D &= \int_{0}^{\infty}\rho(\tau)\int_{0}^{\infty}L'(c_\tau\rho(\tau)\varphi_1(w)v)vg(v)H(wv) dv dG(\tau)\\
  &=\int_{0}^{\infty}\rho(\tau)\int_{0}^{\infty}\{\lambda_1(c_\tau \rho(\tau))L'(\varphi_1(w)v)+
  \lambda_2(\varphi_1(w)v)L'(c_\tau\rho(\tau))\} \\
  &\qquad\qquad\qquad\times vg(v)H(wv) dv dG(\tau)\\ 
  &=D'\int_{0}^{\infty}\lambda_1(c_\tau \rho(\tau))\rho(\tau) dG(\tau)
  + \int_{0}^{\infty}L'(c_\tau \rho(\tau))\rho(\tau) dG(\tau)\int_{0}^{\infty}\lambda_2(\varphi_1(w)v)\\
   &\qquad\qquad\qquad\times vg(v)H(wv) dv \\
  &=D'E\{\lambda_1(c_\tau \rho(\tau))\rho(\tau)\},
\end{align*}
because of the definition of $c_\tau$ in \eqref{best t}. By Lemma \ref{lemma}, the last expectation  is positive, which entails that $D$ and $D'$ have the same sign. Thus, conditions (b), (b') are equivalent, and this completes the proof. 
\end{proof}
Let now $\delta_{1S}$ and $\delta_{1BZ}$ be the S-type and BZ-type estimators of $\sigma$ in the degenerate case $\tau =1$ wp1, which are available, e.g., from \cite{Kubokawa1994}. Then, by Theorem \ref{reddegen}, we directly obtain  the S-type and BZ-type estimators of $\sigma$ in the  non-degenerate case as $\delta_S=c_\tau \delta_{1S}$ and $\delta_{BZ}=c_\tau \delta_{1BZ}$. We note here that for $\delta_{1S}$ and $\delta_{1BZ}$ to satisfy conditions (a'), (b') we also need (as sufficient conditions) the mlr conditions $(A1)$ and $(A2)$.

\begin{rem}\label{Remark 2.4}\textbf{(robustness)}
	Under the conditions and assumptions of Theorem \ref{reddegen}, it can be shown that if  $\delta_{1\varphi}$ in \eqref{phi1}  satisfies conditions (a'), (b') of Theorem \ref{degenerate}, then it improves on $\delta_{01}$ not only when $G(\cdot)$ is degenerate at $\tau =1$, but also for any $G(\cdot)$ in the neighborhood  $\mathcal{F}= \{G(\cdot): c_\tau \leq 1 \}$. The degenerate $G(\cdot)$ belongs in $\mathcal{F}$ with $c_\tau=1$, which justifies the naming neighborhood. We sketch the proof. Following the arguments of  the proof of Theorem \ref{thm2.1} with $\varphi_1$ in the place of $\varphi$, it suffices to show (a'), which holds, and $\int_{0}^{\infty}\rho(\tau)\int_{0}^{\infty}L'(\varphi_1(w)\rho(\tau)v)vg(v)H(wv) dv dG(\tau) \geq 0$ for $G(\cdot) \in \mathcal{F}$. To this end, note that $\varphi(w)=c_\tau\varphi_1(w)$ satisfies condition (b) of Theorem \ref{thm2.1}, i.e., $\int_{0}^{\infty}\rho(\tau)\int_{0}^{\infty}L'(c_\tau\varphi_1(w)\rho(\tau)v)vg(v)H(wv) dv dG(\tau)\geq 0$ and use $c_\tau \leq 1$ and the convexity properties of $L(t)$. In a general decision theoretic context, the insensitivity of a dominance result to the assumed distribution is (misleadingly) termed as robustness of the dominating estimator (though it has nothing to do with influence curves or breakdown points).Thus, $\delta_{1\varphi}$ is a robust improved estimator. Similar robustness properties have been established by \cite{KuboSriva}, \cite{PetropoulosKourouklis2005}, and \cite{MaruyamaStrawderman2013}.
\end{rem}

 \begin{rem}\label{Remark2.3}
 	In the case of the entropy loss $L(t)=t-lnt-1$, Theorems \ref{thm2.1}, \ref{degenerate}, \ref{reddegen},  reobtain the results of  Theorems 2.1, 2.2, and 2.3(i) in \cite{PetropoulosKourouklis2005} where, however, the proofs (and a condition) strongly depend on the specific form the loss. Similar comment holds for the quadratic loss which is also treated in \cite{PetropoulosKourouklis2005}.  In this respect, the results of Section \ref{sect scale mixtures} unify and generalize  main findings in \cite{PetropoulosKourouklis2005}, while, also, explaining them.
 \end{rem}

\begin{exam}[\textbf{estimation of the scale parameter in scale mixtures of normal distributions}]
Let $\rho(\tau)$ be a positive and decreasing function. Given $\tau >0$, let $S$
and $T$ be independent statistics  such that $ S/\big(\rho(\tau)\sigma\big)$ and $T/\big(\rho(\tau)\sigma\big)^{1/2}$ have chi-square with $n$ degrees of freedom, $\chi_n^2$, and multivariate normal, $N_p(\mu/\big(\rho(\tau)\sigma\big)^{1/2}, I)$, distributions respectively. This is the  model of scale (variance) mixtures of normal distributions in  canonical form. See also \cite{PetropoulosKourouklis2005} for this model in a linear regression context. Set $X=||T||^2$, so that $ X/\big(\rho(\tau)\sigma\big)$ has a noncentral chi-square distribution with $p$ degrees of freedom and noncentrality parameter $||\mu||^2/\big(\rho(\tau)\sigma\big)$, $\chi_p^2(||\mu||^2/\big(\rho(\tau)\sigma\big))$. Thus, we obtain the model in \eqref{1.1}, with $g(\cdot) \sim \chi_n^2$, $h(\cdot;\lambda) \sim \chi_p^2(\lambda)$, $\lambda=||\mu||^2/\big(\rho(\tau)\sigma\big)$, and  $\kappa(\lambda)=0$.  Now, it is well known that $\chi_p^2(\lambda)$ has the mlr property with respect to $\lambda$, and hence it is also stochastically increasing in $\lambda$. Thus, condition (B1) holds, and since  $\rho(\tau)$ is decreasing, for $\tau_1<\tau_2$ we have $\lambda(\tau_1)<\lambda(\tau_2)$ which in turn implies $H(x;\lambda(\tau_1)>H(x;\lambda(\tau_2)$, i.e., condition (B2) holds too. Both conditions (A1) and (A2)  are the mlr property of a (central) chi-square distribution. The degenerate case $\tau=1$ wp1 corresponds to $S \sim \sigma \chi_n^2$  and $T \sim N_p(\mu, \sigma I)$, and S-type and  BZ-type estimators  of $\sigma$ are available in the literature, e.g., from \cite{Kubokawa1994}. Also \cite{Maruyama1998}-type (MA-type) estimators  for the quadratic and entropy losses  are known to satisfy the conditions (a'), (b') of Theorem \ref{degenerate}. Thus, in the non-degenerate case,  S-type, BZ-type, and MA-type estimators for $\sigma$ are readily derived from Theorem \ref{reddegen}, for each given $G(\cdot)$ and $L(t)$. 
 For instance, when $\rho(\tau)=1/\tau$ and  $G(\cdot) \sim \chi_\nu^2/\nu$ or $G(\cdot) \sim 1/\chi_{p+1}^2$ we get improved estimators of $\sigma$ in the case of a  multivariate $t$-distribution with $\nu$ degrees of freedom or a $p$-variate double exponential distribution, respectively. 
 For $\rho(\tau)=1/\tau$ and Brown's loss $ L(t)=(lnt)^2$, \eqref{best t} gives $lnc_\tau=Eln\tau$ and the neighborhood of robustness is $\mathcal{F}= \{G(\cdot): Eln\tau \leq0 \}$. Both the multivariate $t$-family and the $p$-variate double exponential family belong in $\mathcal{F}$, because $Eln\tau \leq lnE\tau=lnE(\chi_\nu^2/\nu)=0$ in the former case, and $Eln\tau=Eln(1/\chi_{p+1}^2)=-Eln(\chi_{p+1}^2)=-ln2-\Psi((p+1)/2)\leq -ln2-\Psi(1)<0$, for every $p\geq1$ in the latter ($\Psi$ is the digamma function). Thus, under Brown's loss the dominance of the  S-type and BZ-type estimators of the variance of a normal distribution extends over the families of $t$ and double exponential distributions.
  \end{exam}

\begin{exam}[\textbf{estimation of the scale parameter in scale mixtures of exponential distributions}]
For $\rho(\tau)$ a positive and decreasing function, let the data $\textbf{X}=(X_1,X_2,...,X_n), n\geq 2$, have density $f(x_1,x_2,...,x_1;\mu, \sigma)$ \\
=$\int_{0}^{\infty}\big(1/(\rho(\tau)\sigma\big)^n\big)exp\{(-1/\big(\rho(\tau)\sigma\big) \sum_{1}^{n}(x_i-\mu)\}I(x_{(1)}>\mu) dG(\tau)$, where $x_{(1)}=min\{x_1,x_2,...,x_n\}$, and $\mu \in (-\infty, +\infty)$,  $\sigma>0$ are unknown parameters. A minimal sufficient statistic for this data is $(S,X)$, 
where $S=\sum_{1}^{n}(X_i-X_{(1)})$ and $X=X_{(1)}$, so the model in \eqref{1.1} obtains with 
$g(\cdot)$ being the density of $\frac{1}{2}\chi_{2(n-1)}^2$, $h(x;\lambda)=nexp\{-n\big(x- \mu/\big(\rho(\tau)\sigma\big)\big)\}$, $\lambda= \mu/\big(\rho(\tau)\sigma\big)$ and $\kappa(\lambda)=\lambda$. Conditions $(A1)$, $(A2)$, and $(B1)$ are well known to hold, cf. \cite{Kubokawa1994}. Regarding $(B2)$, let $x>0$ and  observe that $H(x;\lambda(\tau))=\int_{0}^{x}nexp\{-n\big(u-\mu/\big(\rho(\tau)\sigma\big)\big)\}I(u>\mu/\big(\rho(\tau)\sigma\big)) du$. Thus, for $\mu=0$, we have $H(x;\lambda(\tau))=1- exp\{-nx\}$;  for $\mu>0$ and $x\leq \mu/\big(\rho(\tau)\sigma\big)$, we have $H(x;\lambda(\tau))=0$, whereas for $\mu>0$ and $x> \mu/\big(\rho(\tau)\sigma\big)$, we get $H(x;\lambda(\tau))=1-exp\{-nx\}exp\{n\mu/\big(\rho(\tau)\sigma\big)\}$; for $\mu <0$, $H(x;\lambda(\tau))=(1-exp\{-nx\})exp\{n\mu/\big(\rho(\tau)\sigma\big)\}$. In all cases, $H(x;\lambda(\tau))$ is nonincreasing in $\tau >0$ for $x>0$, and thus $(B2)$ holds. The degenerate case $\tau=1$ wp1 corresponds to an i.i.d. sample from the exponential distribution with parameters $(\mu, \sigma)$ for which S-type and  BZ-type estimators   of $\sigma$ are available in the literature, e.g., from \cite{Kubokawa1994}.  Also, MA-type estimators are given in  \cite{PetropoulosKourouklis2002} for the quadratic and entropy losses and are known to satisfy the conditions (a'), (b') of Theorem \ref{degenerate}.
 Thus, in the non-degenerate case,  from Theorem \ref{reddegen}, S-type,  BZ-type, and MA-type  estimators of $\sigma$ are directly derived, for each given $G(\cdot)$ and $L(t)$. For instance, when $\rho(\tau)=1/\tau$ and $G(\cdot)$ is a Gamma distribution with shape parameter $\alpha$ and scale parameter 1, or this Gamma divided by $\alpha$, we obtain improved estimators of the scale parameter of a multivariate Lomax or a modified multivariate Lomax distribution.
  In addition, for $L(t)=(lnt)^2$, these distributions belong in the neighborhood of robustness $\mathcal{F}=\{G(\cdot): Eln\tau\leq0\}$ for $0< \alpha <1$ and any $\alpha >0$, respectively, because in the former case, $Eln\tau= \Psi(\alpha) \leq \Psi(1) <0$ and in the latter, $Eln\tau \leq lnE\tau =Eln1=0$.
 \end{exam}

\section{Estimation of $ln\sigma$ and the entropy}\label{sect entropy}

In this section we consider the model in \eqref{1.1} and  aim at estimating the entropy of (the distribution of) $S$. Since $\sigma$ is a scale parameter for $S$, its entropy is given by $e(\sigma)=ln\sigma + e(1)$, where $e(1)$ is the entropy at $\sigma=1$, a known constant for given $\rho(\tau)$ and $G(\cdot)$. Specifically, it is  $e(1)=-\int_{0}^{\infty}lng_0(v)g_0(v) dv$ with $g_0(v)$ the density of $S/\sigma$ as in \eqref{pdfs1}.  Hence, it suffices to estimate $ln\sigma$.

 Let  $L_0(u)$ be a loss function on $(-\infty, +\infty)$ and set $L(t)=L_0(lnt)$. The losses $L_0(u)$ and  $L(t)$ will be used for the estimation of $ln\sigma$ and  $\sigma$, respectively. If  $L_0(u)$ is convex ($L_0'(u)$ increasing) with minimum 0 at $u=0$, then $L(t)$ is convex in log ($tL'(t)$ increasing) with minimum 0 at $t=1$. On the other hand, if $L_0(u)$ is strictly bowl-shaped  on $(-\infty, +\infty)$, i.e.,   decreasing for $u<0$ and increasing for $u>0$,  with minimum $0$ at $u=0$, then $L(t)$ is  strictly bowl-shaped on $(0, \infty)$, i.e., decreasing for $0<t<1$  and increasing for $t>1$,  with minimum $0$ at $t=1$.

 Based on $(S,X)$ we consider an estimator $\delta$ of $\sigma$  under $L(t)$ incurring loss $L(\delta/\sigma)$. Having $\delta $ at our disposal, we then take $\zeta=ln\delta$  as an estimator of $ln\sigma$ under $L_0(u)$ incurring loss $L_0(\zeta-ln\sigma)$.  The following proposition is elementary yet catalytic in estimating $ln\sigma$. Its proof does not require any condition on the losses, except for finite risks.
 \begin {prop}\label{prop 3.1}
 $\delta_1$ improves on   $\delta_2$ for estimating $\sigma$ under $L(t)=L_0(lnt)$ if and only if $\zeta_1=ln\delta_1$ improves on $\zeta_2=ln\delta_2$ for estimating $ln\sigma$
 under $L_0(u)$. 
 \end{prop}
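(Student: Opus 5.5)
The plan is to show that the two loss functions agree pointwise, so that the two risk functions coincide identically; the equivalence of the dominance statements then follows immediately from the definition of "improves on".

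First I fix the meaning of "improves on": $\delta_1$ improves on $\delta_2$ when $R(\delta_1;\theta)\le R(\delta_2;\theta)$ for every parameter value $\theta=(\mu,\sigma)$, with strict inequality for some $\theta$. Here the risk is $R(\delta;\theta)=E_\theta L(\delta/\sigma)$, and the expectation is over the joint (unconditional) distribution of $(S,X)$ in \eqref{1.1}. This distribution is the same whether we estimate $\sigma$ or $ln\sigma$; only the loss changes.

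Next I make the key pointwise identification. Since every estimator $\delta$ of $\sigma$ is positive, $\zeta=ln\delta$ is well defined and real-valued. For every realization of $(S,X)$ and every $\sigma>0$,
\begin{equation*}
L(\delta/\sigma)=L_0\bigl(ln(\delta/\sigma)\bigr)=L_0(ln\delta-ln\sigma)=L_0(\zeta-ln\sigma).
\end{equation*}
Taking expectations under the common distribution indexed by $(\mu,\sigma)$, the risk of $\delta$ for $\sigma$ under $L$ equals the risk of $\zeta$ for $ln\sigma$ under $L_0$, identically in $(\mu,\sigma)$. Because the map $\sigma\mapsto ln\sigma$ is a bijection from $(0,\infty)$ onto $\mathbb{R}$, both estimation problems are indexed by the same parameter points. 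The finite-risk assumption ensures these expectations are real numbers, so the comparisons that follow are meaningful.

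Applying this identity to $\delta_1$ and $\delta_2$ gives $R_L(\delta_i;\mu,\sigma)=R_{L_0}(\zeta_i;\mu,\sigma)$ for $i=1,2$. Hence the weak inequality "$\le$ everywhere" and the strict inequality "somewhere" transfer verbatim in both directions, which proves both implications at once.

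I do not expect a genuine obstacle here, because the argument is only a change of variables inside the loss. The one point to state explicitly is that the correspondence $\delta\leftrightarrow ln\delta$ is a bijection between positive estimators of $\sigma$ and real-valued estimators of $ln\sigma$, so the converse direction is also covered: any $\zeta_i$ is of the form $ln\delta_i$ with $\delta_i=e^{\zeta_i}$. No convexity or other condition on $L_0$ is needed, which agrees with the remark preceding the proposition.
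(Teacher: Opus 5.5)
Your proof is correct and is the same as the paper's, which consists of the pointwise identity $L(\delta/\sigma)=L_0(\ln\delta-\ln\sigma)=L_0(\zeta-\ln\sigma)$; equality of the risks, and hence equivalence of the dominance statements, follows at once. Your extra remarks on positivity of $\delta$, finiteness of the risks, and both problems being indexed by the same $(\mu,\sigma)$ only make explicit what the paper leaves implicit.
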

 \begin{proof}
 For any $\delta$ and  $\zeta=ln\delta$  the two losses incurred are equal,  $L(\delta/\sigma)=L_0(ln(\delta/\sigma))\allowbreak= L_0(ln\delta-ln\sigma)=L_0(\zeta-ln\sigma)$.   
 \end{proof}
 
 By Proposition~\ref{prop 3.1}, estimation of $ln\sigma$ amounts to estimation of $\sigma$, and clearly is independent of the method used to estimate $\sigma$. Thus, from  the improved estimators of $\sigma$ obtained in Section \ref{sect scale mixtures}, we can directly derive improved estimators of $ln\sigma$ and $e(\sigma)$. We present them below.
 
 Let the loss $L_0(u)$ for estimating $ln\sigma$ be convex, so that  $L(t)=L_0(lnt)$ is  convex in log. Corresponding to an estimator $cS$ of $\sigma$, there is the estimator $lnc + lnS$ of $ln\sigma$ and the best one in terms of minimizing the risk, $EL_0(lnc+lnS-ln\sigma)$, is 
 \begin{equation}\label{zeta0}
 \zeta_0=ln\delta_0=lnc_0 +lnS,
 \end{equation}
  where $\delta_0=c_0S$ is as in \eqref{bestestimator} and $c_0$ as in \eqref{bestconstant}, satisfying $E\{L'(c_0S)S\}=\int_{0}^{\infty}L_0'(lnc_0 +lnv)g_0(v) dv= 0$. The following result provides sufficient conditions for improving on $\zeta_0$ and the entropy estimator $\zeta_0 + e(1)$. It builds  on Theorem \ref{thm2.1} and Proposition \ref{prop 3.1}.
  \begin{thm}\label{thm4.2}\textit{(the non-degenerate model)}
  		Let $L_0'(u)$  be increasing. Assume $(B1)$, $B(2)$ and the following conditions for $\delta_{\varphi}$ and $\varphi$ as in \eqref{phi}:
  	
  	(a) $\varphi(w)$ is nondecreasing and $ \lim_{w\to\infty}\varphi(w)=c_0$.
  	
  	(b)$\int_{0}^{\infty}\int_{0}^{\infty}L_0'\big(ln\varphi(w) + ln\rho(\tau) +lnv\big)g(v)H(wv) dv dG(\tau)\geq0$.
  	
  	Then, 	$\zeta_\varphi=ln\delta_\varphi$ and $\zeta_\varphi +e(1)$ improve on $\zeta_0$ and the entropy estimator $\zeta_0 + e(1)$, respectively.
  	\end{thm}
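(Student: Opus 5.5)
The proof reduces to Theorem \ref{thm2.1} combined with Proposition \ref{prop 3.1}. Set $L(t)=L_0(lnt)$ on $(0,\infty)$. Since $L_0'$ is increasing and $tL'(t)=L_0'(lnt)$, the function $tL'(t)$ is increasing. Also $L(1)=L_0(0)=0$ and $L'(1)=L_0'(0)=0$. So $L$ is convex in log, and Theorem \ref{thm2.1} applies under (B1) and (B2), provided we check its conditions (a) and (b) for $\varphi$.

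\textbf{Step 1: condition (a).} This is literally the same in both theorems. We only need to confirm that $c_0$ here is the constant of \eqref{bestconstant} for the loss $L$. That holds because $E\{L'(c_0S)S\}=E\{L_0'(lnc_0+lnS)\}$, the identity already recorded just before the statement.

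\textbf{Step 2: condition (b).} We rewrite condition (b) of Theorem \ref{thm2.1} in terms of $L_0$. With $t=\varphi(w)\rho(\tau)v$,
\[
\rho(\tau)\,L'\big(\varphi(w)\rho(\tau)v\big)\,v=\frac{1}{\varphi(w)}\,L_0'\big(ln\varphi(w)+ln\rho(\tau)+lnv\big).
\]
The factor $1/\varphi(w)$ is positive and does not depend on $(v,\tau)$. Hence the integral in condition (b) of Theorem \ref{thm2.1} equals $1/\varphi(w)$ times the integral in condition (b) of the present theorem, and the two conditions are equivalent. Theorem \ref{thm2.1} therefore gives that $\delta_\varphi$ improves on $\delta_0$ under $L$.

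\textbf{Step 3: transfer to $ln\sigma$ and the entropy.} Proposition \ref{prop 3.1} gives that $\zeta_\varphi=ln\delta_\varphi$ improves on $\zeta_0=ln\delta_0$ under $L_0$. Since $e(\sigma)-ln\sigma=e(1)$ is a known constant,
\[
L_0\big(\zeta+e(1)-e(\sigma)\big)=L_0(\zeta-ln\sigma).
\]
So the risks of $\zeta_\varphi+e(1)$ and $\zeta_0+e(1)$ for estimating $e(\sigma)$ coincide with those of $\zeta_\varphi$ and $\zeta_0$ for estimating $ln\sigma$. The entropy statement follows.

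The only real work is the change of variables in Step 2, checking that the $\rho(\tau)$ and $v$ factors cancel exactly and leave a positive factor independent of the integration variables. Finiteness and interchange assumptions carry over from the standing assumptions of Section \ref{sect scale mixtures}.
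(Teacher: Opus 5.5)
Your proof is correct and follows the paper's route exactly. The paper sets $L(t)=L_0(\ln t)$ so that $tL'(t)=L_0'(\ln t)$ is increasing, applies Theorem~\ref{thm2.1}, and transfers the result via Proposition~\ref{prop 3.1}. Your identity $\rho(\tau)vL'(\varphi(w)\rho(\tau)v)=L_0'(\ln\varphi(w)+\ln\rho(\tau)+\ln v)/\varphi(w)$ and your constant-shift argument for $e(\sigma)$ spell out what the paper leaves implicit.

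One small correction: strictly, $E\{L'(c_0S)S\}=c_0^{-1}E\{L_0'(\ln c_0+\ln S)\}$, not $E\{L_0'(\ln c_0+\ln S)\}$. This is harmless, since both expressions are set equal to zero.
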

  \begin{proof}
  Set $L(t)=L_0(lnt)$, so $tL'(t)$ is increasing. Then, Theorem \ref{thm2.1} applies and $\delta_{\varphi}$ improves on $\delta_0$ for estimating $\sigma$ under $L(t)$. The result now follows from Proposition \ref{prop 3.1}.
  \end{proof}

In the case of the degenerate model, $\tau=1$ wp1 (in which case, recall, that $\rho(\tau)=1$ wp1), the convexity assumption of $L_0(u)$ is not needed and may be relaxed to strictly bowled-shaped. However, to ensure existence and uniqueness of $c_{01}$ in \eqref{c01} in the next result we invoke $(A1)$. With $\delta_{01}=c_{01}S$ as in \eqref{delta01}, the usual estimator of $ln\sigma$ in the degenerate model is
\begin{equation}
 \zeta_{01}=ln\delta_{01}=lnc_{01} +lnS.
 \end{equation}
 
   Note  that $\zeta_{01}$ also results from \eqref{zeta0} and that  the entropy becomes $e_1(\sigma)=ln\sigma +e_1(1)$, where $e_1(1)=-\int_{0}^{\infty}lng(v)g(v) dv$ is the entropy for $\tau=1$ wp1 and $\sigma=1$. On the other hand, if $L_0(u)$ is  convex the  result follows directly from Theorem \ref{thm4.2}.  
\begin{thm}\label{degentropy}\textit{(the degenerate model)} Let $\tau=1$ wp1. Also, let  $L_0(u)$ be strictly bowl-shaped. Assume $(B1)$, $(A1)$ and the following conditions for  $\delta_{1\varphi}$  and $\varphi_1$ as in \eqref{phi1}:
	
	(a') $\varphi_1(w)$ is nondecreasing and $ \lim_{w\to\infty}\varphi_1(w)=c_{01}$.
	
	(b') $\int_{0}^{\infty}L_0'(ln\varphi_1(w)+lnv)g(v)H(wv) dv\geq0$.
	
	Then $\zeta_{1\varphi}=ln\delta_{1\varphi}$ and $\zeta_{1\varphi}+e_1(1)$ improve on $\zeta_{01}$ and the entropy estimator $\zeta_{01} +e_1(1)$, respectively.
	
\end{thm}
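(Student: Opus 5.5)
The plan is to reduce the statement to Theorem \ref{degenerate} through the loss $L(t)=L_0(lnt)$ and then pass back to $ln\sigma$ and the entropy with Proposition \ref{prop 3.1}.

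First, set $L(t)=L_0(lnt)$. Since $L_0$ is strictly bowl-shaped on $(-\infty,+\infty)$ with minimum $0$ at $u=0$, $L$ is strictly bowl-shaped on $(0,\infty)$ with minimum $0$ at $t=1$. This was observed at the start of this section, and it is exactly the loss hypothesis of Theorem \ref{degenerate}.

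Next, check that the hypotheses of Theorem \ref{degenerate} hold for $L$.
\begin{itemize}
\item Condition (B1) and condition (A1) are assumed directly.
\item Condition (a') is identical in both statements. The constant $c_{01}$ is the same one, because $E\{L'(cS)S\}=E\{L_0'(lnc+lnS)\}$ under $\tau=1$ wp1, $\sigma=1$. Existence and uniqueness of $c_{01}$ are secured by (A1), as the paper notes.
\item For condition (b'), the chain rule gives $L'(t)=L_0'(lnt)/t$, so $L'(\varphi_1(w)v)\,v=L_0'(ln\varphi_1(w)+lnv)/\varphi_1(w)$. Hence the left side of (b') in Theorem \ref{degenerate} equals $1/\varphi_1(w)$ times the left side of (b') in the present theorem.
\end{itemize}
Because $\varphi_1>0$, the two integrals have the same sign, so the present (b') implies Kubokawa's (b').

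Theorem \ref{degenerate} then gives that $\delta_{1\varphi}$ improves on $\delta_{01}$ under $L$. Proposition \ref{prop 3.1} transfers this to the conclusion that $\zeta_{1\varphi}=ln\delta_{1\varphi}$ improves on $\zeta_{01}=ln\delta_{01}$ under $L_0$. This uses the identity $L(\delta/\sigma)=L_0(ln\delta-ln\sigma)$, which in particular holds on the event $W\le0$, where both estimators coincide.

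For the entropy, $e_1(\sigma)=ln\sigma+e_1(1)$ with $e_1(1)$ a known constant. Therefore $L_0\big((\zeta+e_1(1))-e_1(\sigma)\big)=L_0(\zeta-ln\sigma)$, the risks are unchanged, and the dominance carries over.

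The only delicate point is whether differentiability of $L_0$ is implicit: bowl-shapedness alone does not give $L_0'$. I would assume, as the paper does throughout, that $L_0$ is differentiable with the required integrability and interchange of derivative and integral. Under that assumption the argument above is essentially a change of variables.
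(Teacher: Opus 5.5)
Your proposal is correct and takes the same route as the paper: set $L(t)=L_0(\ln t)$, recognize (a') and (b') as the conditions of Theorem \ref{degenerate} (your chain-rule computation, showing the two versions of (b') differ only by the positive factor $1/\varphi_1(w)$, spells out what the paper leaves implicit), and transfer the dominance with Proposition \ref{prop 3.1}. One harmless slip: $E\{L'(cS)S\}=c^{-1}E\{L_0'(\ln c+\ln S)\}$, so your identity for $c_{01}$ is missing the factor $1/c$, but this does not change the root $c_{01}$.
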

\begin{proof}
	 Set $L(t)=L_0(lnt)$. Thus, $(a')$ and $(b')$ are \cite{Kubokawa1994}'s conditions (or the conditions of Theorem \ref{degenerate} in Section \ref{sect scale mixtures}) for  $\delta_{1\varphi}$ to improve on $\delta_{01}$ for estimating $\sigma$ under $L(t)$. The result now follows from Proposition \ref{prop 3.1}.
\end{proof}

 It is well known (\cite{Kubokawa1994}) that the S-type and BZ-type estimators of $\sigma$ satisfy $(a')$ and $(b')$ under $(B1)$, $(A1)$, $(A2)$. The latter conditions are also well known to hold for  the normal and  exponential distributions and after  suitable transformations for Pareto, Weibull, and Burr III.  Hence, for each of these models S-type and BZ-type estimators of $ln\sigma$ and the entropy $e_1(\sigma)=ln\sigma +e_1(1)$ are readily derived by Theorem~\ref{degentropy}, and are given below.

Let $\varphi_{2BZ}(w)$ and $\varphi_{2S}(w)$, respectively, be the solutions to the equations

\begin{equation}\label{phi1BZ}
 \int_{0}^{\infty}L_0'(ln\varphi_{2BZ}(w)+lnv)g(v)H(wv) dv = 0,
\end{equation}
\begin{equation}\label{phi2S}
 \int_{0}^{\infty}L_0'(ln\varphi_{2S}(w)+lnv)vg(v)h(wv) dv = 0,
\end{equation}
and  $\varphi_{3S}(w)=min\{c_{01},\varphi_{2S}(w)\}$. Set 
\begin{equation}\label{phisbz}
\delta_{1\varphi}^S = \varphi_{1S}(W)S, \,\,\,\delta_{1\varphi}^{BZ}=\varphi_{1BZ}(W)S
\end{equation}
 with $\varphi_{1S}(w)=\varphi_{3S}(w)$ and $\varphi_{1BZ}(w)=\varphi_{2BZ}(w)$ if $w>0$, whereas $\varphi_{1S}(w)=\varphi_{1BZ}(w)=c_{01}$ if $w\leq 0$, as the S-type and BZ-type estimators of $\sigma$ under $L(t)=L_0(lnt)$ when $\tau=1$ wp1. Then, the S-type and BZ-type estimators of $ln\sigma$ for the degenerate model under $L_0(u)$ are given by 
 \begin{equation}
 \zeta_{1\varphi}^S=ln\delta_{1\varphi}^S, \,\,\,\, \zeta_{1\varphi}^{BZ}=ln\delta_{1\varphi}^{BZ}.
 \end{equation}

Besides, $\zeta_{1\varphi}^S +e_1(1)$ and $\zeta_{1\varphi}^{BZ}+e_1(1)$ are the S-type and BZ-type estimators of the entropy $ln\sigma +e_1(1)$.

We return again to the non-degenerate model. If $L(t)=L_0(lnt)$, with $L_0(u)$  convex,  also satisfies \eqref{Ilio}, by Theorem \ref{reddegen} improved estimation of $\sigma$ under $L(t)$ reduces to the degenerate case and hence by Proposition \ref{prop 3.1} so does improved estimation of $ln\sigma$ and the entropy $ln\sigma +e(1)$ under $L_0(u)$. Also, Theorem \ref{thm2.3} applies so that  $c_0=c_\tau c_{01}$ and the usual estimator of $ln\sigma$ in \eqref{zeta0} is given by 
\begin{equation}\label{zeta00}
\zeta_0=lnc_0 +lnS = lnc_\tau + lnc_{01} +lnS = lnc_\tau + \zeta_{01}. 
\end{equation}

 Thus, the S-type and BZ-type estimators of $ln\sigma$ are expressed as 
  \begin{equation}\label{zetas}
 \zeta_\varphi^S=ln(c_\tau\delta_{1\varphi}^S) = lnc_\tau +ln\varphi_{1S}(W) +lnS =lnc_\tau + \zeta_{1\varphi}^S,
 \end{equation}
 \begin{equation}\label{zetabz} 
 \zeta_\varphi^{BZ}=ln(c_\tau\delta_{1\varphi}^{BZ}) = lnc_\tau + ln\varphi_{1BZ}(W) + lnS =lnc_\tau + \zeta_{1\varphi}^{BZ},
 \end{equation}
  and accordingly for the entropy $ln\sigma +e(1)$. 
 
  As expected, but remarkably, all the computations in \eqref{zeta00}, \eqref{zetas}, and \eqref{zetabz} are performed in the degenerate model and for the estimation of $\sigma$. Observe that $\zeta_0$, $\zeta_\varphi^S$, and  $\zeta_\varphi^{BZ}$ are linear translations (by the same displacement, $lnc_\tau$) of their degenerate model counterparts, $\zeta_{01}$, $\zeta_{1\varphi}^S$ and $\zeta_{1\varphi}^{BZ}$. When $\tau=1$ wp1, \eqref{best t} gives $c_\tau=1$, and  \eqref{zeta00}, \eqref{zetas}, and \eqref{zetabz} naturally  collapse to the estimators of the degenerate model. 
  
   Brown's loss function $L(t)=(lnt)^2$ and $L(t)=t-lnt-1$ (entropy loss) which correspond, respectively, to the convex losses $L_0(u)=u^2$ and $L_0(u)=e^u-u-1$ (Linex loss) are convex in log,  and satisfy \eqref{Ilio}. We refer to \cite{Bobotas2012} for some other convex in log losses satisfying \eqref{Ilio}. For all of them and for any scale mixture of normal or scale mixture of exponential distributions, the S-type and BZ-type estimators of $ln\sigma$ are given by \eqref{zetas} and \eqref{zetabz}. As an illustration,  we present below the case of scale mixtures of exponential distributions for the quadratic loss.

 \begin{exam}[\textbf{estimation of the entropy in scale mixtures of exponential distributions under the quadratic loss}]

 Let $L_0(u)=u^2$, corresponding to $L(t)=(lnt)^2$, and consider  any  scale mixture of exponential distributions with $\rho(\tau)=1/\tau$. As in Example 2.2, set  $S=\sum_{1}^{n}(X_i-X_{(1)})$,  $X=X_{(1)}$, and $W=X/S$.  From \eqref{c01}, \eqref{best t}, and \eqref{zeta00} we get $lnc_{01}=-ElnS=-\Psi(n-1)$, $lnc_\tau=Eln\tau$, and $\zeta_0= Eln\tau -\Psi(n-1) + lnS =Eln\tau + \zeta_{01}$. Also,  \eqref{phi1BZ} and \eqref{phi2S}  give $ln\varphi_{2BZ}(w)=-\Psi(n-1)-ln(1+nw)/\big((1+nw)^{n-1}-1\big)$ and $ln\varphi_{2S}(w)=-\Psi(n) +ln(1+nw)$. Consequently, from \eqref{phisbz}, for $w>0$, we have $ln\varphi_{1BZ}(w)=-\Psi(n-1)-ln(1+nw)/\big((1+nw)^{n-1}-1\big)$ and $ln\varphi_{1S}(w)=min\{-\Psi(n-1), -\Psi(n) + ln(1+nw)\} $ whereas, for $w\leq0$, $ln\varphi_{1BZ}(w)=ln\varphi_{1S}(w)=-\Psi(n-1)$. Substituting in \eqref{zetas} and \eqref{zetabz}, we obtain the S-type and BZ-type estimators of $ln\sigma$,
 $\zeta_\varphi^S$ and $\zeta_\varphi^{BZ}$, for the  non-degenerate model and, simultaneously,
 $\zeta_{1\varphi}^S$ and $\zeta_{1\varphi}^{BZ}$ for the degenerate model. The corresponding estimators of the entropy are $\zeta_\varphi^S +e(1)$ and $\zeta_\varphi^{BZ}+e(1)$ for the non-degenerate model, and  $\zeta_{1\varphi}^S +e_1(1)$ and $\zeta_{1\varphi}^{BZ} +e_1(1)$ for the degenerate model.
   The estimators  $\zeta_\varphi^S$ and $\zeta_\varphi^{BZ}$ are given by \cite{Petroetal}, whereas  $\zeta_{1\varphi}^S$ and $\zeta_{1\varphi}^{BZ}$ by \cite{Patra20}. Interestingly, equations \eqref{zetas} and \eqref{zetabz} help explaining each of the  components of the S-type and BZ-type estimators in the form obtained by \cite{Petroetal}. 
\end{exam}

 We anticipate  that  extension of entropy estimation in multivariate distributions is possible along the lines of the above results, but we have not pursued this endeavor. Scale mixtures of multivariate normal distributions (see \cite{PetropoulosKourouklis2005}) could be an interesting starting point. In the case of a multivariate normal $N_p(\mu, \Sigma)$, its entropy is a linear translation of  $ln|\Sigma|$ and improved estimators under the quadratic loss have been derived by \cite{Misraetal}. The same estimators can be obtained by using the improved estimators of $|\Sigma|$ under Brown's loss $L(t)=(lnt)^2$  given in \cite{IK} and then invoking Proposition \ref{prop 3.1}. 

%
%

\end{document}